\documentclass[11pt]{amsart}
\usepackage{amssymb}
\usepackage{amscd}
\usepackage[all]{xy}
\usepackage{comment}

\numberwithin{equation}{section}

\def\today{\number\day\space\ifcase\month\or   January\or February\or
March\or April\or May\or June\or   July\or August\or September\or
October\or November\or December\fi\   \number\year}

\newtheorem{Thm}{Theorem}
\newtheorem{Cor}[Thm]{Corollary}

\newtheorem*{Claim-nonum}{Claim}

\theoremstyle{definition}

\newtheorem{qst}[Thm]{Question}
\newtheorem{rmk}[Thm]{Remark}

\newcounter{CnsEnumi}

\newcommand{\beq}{\begin{equation}}
\newcommand{\eeq}{\end{equation}}
\newcommand{\beqr}{\begin{eqnarray*}}
\newcommand{\eeqr}{\end{eqnarray*}}
\newcommand{\bal}{\begin{align*}}
\newcommand{\eal}{\end{align*}}
\newcommand{\bei}{\begin{itemize}}
\newcommand{\eei}{\end{itemize}}

\newcommand{\gm}{\gamma}

\newcommand{\ep}{\varepsilon}

\newcommand{\kp}{\kappa}

\newcommand{\rh}{\rho}

\newcommand{\Gm}{\Gamma}

\newcommand{\Z}{{\mathbb{Z}}}
\newcommand{\R}{{\mathbb{R}}}
\newcommand{\C}{{\mathbb{C}}}
 \newcommand{\N}{{\mathbb{N}}}

\newcommand{\T}{\mathbb{T}}

\pagenumbering{arabic}

\newcommand{\id}{{\operatorname{id}}}

\newcommand{\diag}{{\operatorname{diag}}}

\newcommand{\Aut}{{\operatorname{Aut}}}

\newcommand{\Ell}{{\operatorname{Ell}}}

\newcommand{\Zh}{\mathcal{Z}}

\newcommand{\rc}{\mathrm{rc}}

\newcommand{\eps}{\varepsilon}
\numberwithin{equation}{section}

\newcommand{\tgamma}{\Gamma}

\newcommand{\dirlim}{\varinjlim}

\newcommand{\andeqn}{\qquad {\mbox{and}} \qquad}


\newcommand{\hm}{homomorphism}

\newcommand{\pj}{projection}

\newcommand{\cfn}{continuous function}




\title{Exotic circle actions on classifiable $C^*$-algebras}

\author{Ilan Hirshberg}
\address{Department of Mathematics, Ben Gurion University of the Negev,
P.O.B. 653, Be'er Sheva 84105, Israel}

\subjclass[2010]{46L35,46L40,46L80}
\thanks{This work was supported by the Israel Science Foundation grant 
no.~476/16. I thank the Fields Institute, where some of this work on this paper was done, for its hospitality} 

\begin{document}

\begin{abstract}
We construct an example of a simple nuclear separable unital stably finite 
$\Zh$-stable 
$C^*$-algebra 
along with an action of the circle such that the crossed product is simple but 
not $\Zh$-stable. 
\end{abstract}

\maketitle

Classification theory for simple nuclear $C^*$-algebras
reached a milestone in the last decade.
The results of \cite{elliott-gong-lin-niu}
and \cite{tikuisis-white-winter},
building on decades of work by many authors,
show that $\Zh$-stable simple separable nuclear unital $C^*$-algebras
satisfying the Universal Coefficient Theorem
are classified via the Elliott invariant,
$\Ell (\cdot)$, which consists of the ordered $K_0$-group
along with the class of the identity, the $K_1$-group,
the trace simplex,
and the pairing
between the trace simplex and the $K_0$-group.
Earlier counterexamples
due to Toms and R{\o}rdam
(\cite{toms-counterexample, rordam-counterexample}),
related to ideas of Villadsen (\cite{Villadsen-perforation}),
show that one cannot expect to be able to extend
this classification theorem beyond the $\Zh$-stable case,
at least without either extending the invariant
or restricting to another class of $C^*$-algebras.

The property of $\Zh$-stability, known to be equivalent to finite nuclear dimension and conjectured to be equivalent to strict comparison for simple separable nuclear infinite dimensional $C^*$-algebras, is thus a key regularity property. A central question, then, is to find conditions which ensure that various natural constructions imply $\Zh$-stability. In particular, one would want to know how robust is the class of classifiable $C^*$-algebras under various natural constructions, particularly crossed products by actions of amenable groups. 

For actions of countable discrete amenable groups, it is known that if $\alpha$ 
is a strongly outer action of such a group on a simple separable unital nuclear 
$\Zh$-stable $C^*$-algebra $A$, then the crossed product is again $\Zh$-stable 
under each of the following conditions:
\begin{enumerate}
	\item The trace space $T(A)$ is a Bauer simplex with finite dimensional 
	extreme boundary, the action of $G$ on the extreme boundary factors through 
	a finite group action. See \cite{GHV}, and also 
	\cite{matui-sato,sato} for earlier similar results with stronger conditions 
	on the action.
	\item The trace space $T(A)$ is a Bauer simplex with finite dimensional extreme boundary, $G = \Z$ with no restrictions on the action. See \cite{wouters}.
	\item The trace space $T(A)$ is a Bauer simplex with finite dimensional 
	extreme boundary and the action on $\partial_e (T(A))$ is free. See \cite{GGNV}, and 
	related results with a stronger conclusion but with stronger hypotheses in
	\cite{wouters}.
	\item The trace space $T(A)$ is a Bauer simplex (with possibly infinite 
	dimensional extreme boundary), $G = \Z^d$ , and the action of $G$ on  
	$\partial_e 
	(T(A))$ is free and minimal. See \cite{GGNV}.
\end{enumerate}
The first three results above in fact show somewhat more: they show that the 
action is cocycle-conjugate to an action which tensorially absorbs the trivial 
action on $\Zh$. They also show that such actions have finite Rokhlin 
dimension. We refer the reader to \cite{HWZ,HSWW,Gadella-compact,SWZ} for the 
definitions and discussion of Rokhlin dimension for actions of finite groups 
and $\Z$, for actions of $\R$, for actions of compact groups, and for actions 
of residually finite groups, respectively.

The purpose of this paper is to provide a negative result when the group is not 
discrete: we provide an example of an action of the circle group on a simple 
separable unital $\Zh$-stable $C^*$-algebra such that the crossed product is 
simple but not $\Zh$-stable. That is stated as Corollary \ref{cor:main}. 

It is easy to construct such an example if we do not assume that the crossed 
product is simple. Indeed, if $A$ is obtained as a crossed product $A \cong 
C(X) \rtimes_{\alpha} \Z$, then the crossed product by the dual action 
satisfies, by Takai duality, $A \rtimes_{\hat{\alpha}} \T \cong C(X) \otimes 
K$, which is certainly not $\Zh$-stable. In fact, if $X$ is chosen to be 
infinite dimensional and the action $\alpha$ has zero mean dimension, then $A$ 
is $\Zh$-stable by the main theorem in \cite{Elliott-Niu}, but $A 
\rtimes_{\hat{\alpha}} \T$ 
doesn't even have finite nuclear dimension. This is not surprising. However, 
one might have been tempted to conjecture that a suitable outerness condition 
would suffice. The natural outerness condition here would be full strong Connes 
spectrum (see 
\cite{Kishimoto-strong-connes}), which for actions on simple $C^*$-algebras is 
equivalent to requiring that the crossed product is simple. Taking cue from the 
finite group case, one might have even hoped that this would imply finite 
Rokhlin dimension. The example in this paper shows that the situation is more 
complicated.

In this paper, if $A$ is a unital $C^*$-algebra, by a (normalized) trace on $A$ we mean a tracial state. A tracial state $\tau$ induces a non-normalized trace on $M_{\infty}(A)_+$ given by $\tau(a) = \sum_{k=1}^{\infty} a_{kk}$, where we think of $a = (a_{kl})_{k,l = 1,2,3\ldots} $ as an infinite matrix with finitely many non-zero coefficients in $A$ (and slightly abusing notation to use $\tau$ for the extension as well). For $a \in M_{\infty}(A)_+$, we denote $d_{\tau}(a) = \lim_{n \to \infty} \tau (a^{1/n})$. For $r \in [0,\infty)$ we say that $A$ has $r$-comparison if for any $a,b \in M_{\infty}(A)_+$, if $d_{\tau} (a) + r < d_{\tau}(b)$ for any normalized quasitrace $\tau$ on $A$ then $a$ is Cuntz subequivalent to $b$. As we are dealing with nuclear $C^*$-algebras in this paper, any quasitrace is a trace (\cite{haagerup}). The radius of comparison of $A$ is defined to be the infimum the $r$'s for which $A$ has $r$-comparison. We refer the reader to \cite{Toms-flat-dim-growth} for 
a discussion of the radius of comparison.

I thank M.~Ali Asadi-Vasfi, Apurva Seth and the referee for their comments on an earlier draft of this paper.

\begin{Thm}\label{thm:main}
	There exists a simple unital separable AH-algebra $C$ which is not 
	$\Zh$-stable and admits a properly outer automorphism $\alpha$ such that 
	$A \rtimes_{\alpha} \Z$ is $\Zh$-stable.
\end{Thm}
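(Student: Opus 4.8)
The plan is to realize $C$ as a Villadsen-type inductive limit carrying a shift automorphism, arrange that $C$ has strictly positive radius of comparison, and show that passing to the crossed product kills the classes responsible for the comparison defect. First I would construct $C = \varinjlim (C_n, \phi_n)$ from homogeneous blocks $C_n = M_{k_n}(C(X_n))$, with $X_n$ a product of spheres (or an analogous space carrying nontrivial line/vector bundles) chosen so that the dimension growth $\dim(X_n)/k_n$ stabilizes to a positive constant. The connecting maps $\phi_n$ combine sufficiently many point evaluations -- to guarantee that $C$ is simple, unital and separable -- with a block implementing a coordinate shift on the tensor factors of $X_n$; this shift is what globalizes to the automorphism $\alpha$ of the limit. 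Simplicity, unitality, separability and the AH structure then follow from the standard inductive-limit machinery.

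To see that $C$ is not $\Zh$-stable, I would exhibit positive elements $a,b \in M_\infty(C)_+$ coming from vector bundles over the $X_n$ whose nontrivial characteristic classes obstruct Cuntz subequivalence even though $d_\tau(a) < d_\tau(b)$ for every trace $\tau$; the usual Villadsen/Toms estimate bounds $\rc(C)$ below by a positive multiple of the asymptotic dimension growth, and $\rc(C) > 0$ precludes $\Zh$-stability by the Toms--Winter equivalence. Proper outerness of $\alpha$ I would check directly from the shift: since $\alpha$ moves the coordinate factors it acts nontrivially on the spectrum at each finite stage, so that no power $\alpha^n$ with $n \neq 0$ can be inner.

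The crux, and the main obstacle, is showing that $C \rtimes_\alpha \Z$ is $\Zh$-stable. The mechanism I would exploit is that the shift \emph{dilutes} the obstruction: in the Pimsner--Voiculescu sequence the perforation classes of $K_0(C)$ that witness the comparison defect should lie in the image of $1 - \alpha_*$, so they vanish in $K_0(C \rtimes_\alpha \Z)$, which thereby becomes weakly unperforated. The hard part is upgrading this $K$-theoretic cancellation to an honest statement about the Cuntz semigroup, namely that the orbit translates $\alpha^k(b)$ provide enough room to render the previously incomparable $a$ Cuntz subequivalent to $b$ inside the crossed product, yielding $\rc(C \rtimes_\alpha \Z) = 0$, i.e.\ strict comparison.

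Once strict comparison is established, $C \rtimes_\alpha \Z$ is simple, separable, unital and nuclear, and satisfies the UCT (the UCT class being closed under crossed products by $\Z$), so the Toms--Winter theorem gives $\Zh$-stability. I expect the delicate bookkeeping to be in controlling the Cuntz semigroup of the crossed product -- ensuring that the defect genuinely collapses to zero rather than being merely preserved, as a naive large-subalgebra comparison would suggest -- which is where the precise choice of the spaces $X_n$, the multiplicities $k_n$, and the shift must be tuned.
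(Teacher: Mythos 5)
There is a genuine gap, and it sits exactly where you locate ``the crux.'' Your route to $\Zh$-stability of $C \rtimes_\alpha \Z$ is: $K$-theoretic cancellation via Pimsner--Voiculescu $\Rightarrow$ Cuntz-semigroup collapse $\Rightarrow$ strict comparison $\Rightarrow$ $\Zh$-stability ``by the Toms--Winter theorem.'' The last implication is not a theorem: for simple separable nuclear unital $C^*$-algebras, strict comparison $\Rightarrow$ $\Zh$-stability is the \emph{open} direction of the Toms--Winter conjecture (the paper's introduction explicitly calls it conjectural; only $\Zh$-stability $\Rightarrow$ strict comparison, due to R{\o}rdam, is available unconditionally). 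The known partial results require the trace simplex to be Bauer with finite-dimensional extreme boundary, and nothing in your construction gives control of $T(C \rtimes_\alpha \Z)$ -- indeed the paper raises exactly this as an open question, expecting the Poulsen simplex. Moreover, the step you would feed into it is itself only a heuristic: vanishing of perforation classes in $K_0(C\rtimes_\alpha\Z)$ does not yield strict comparison (comparison is a Cuntz-semigroup statement, and there is no mechanism offered for the upgrade); you name this as ``the hard part'' but do not supply an argument, and it is far from clear that a coordinate shift even commutes with Villadsen-type connecting maps so as to define an automorphism of the limit, let alone that its orbit translates ``dilute'' the obstruction.

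The paper avoids comparison in the crossed product altogether, and its automorphism is structurally very different from a shift. The Villadsen system is overlaid on a binary tree, so that $C_n \cong C(X_n \times \Z_{2^n}, M_{r(n)})$ carries a Cantor-set direction, and $\alpha$ is an odometer-type automorphism (translation on $\Z_{2^n}$ twisted by permutation unitaries $v_n$ chosen to make $\alpha_{n+1} \circ \Gm_{n+1,n} = \Gm_{n+1,n} \circ \alpha_n$ hold exactly). This $\alpha$ is periodic on each finite stage $C_n$ (period $2^n$), which gives near approximate innerness, and the central projections $p_k(x,j) = \delta_{k,j} 1_{M_{r(n)}}$ give exact Rokhlin towers of length $2^n$; then \cite[Theorem 4.1]{hirshberg-JOT} (Rokhlin property plus near approximate innerness) yields $\Zh$-stability of the crossed product directly, with no Cuntz-semigroup computation and no appeal to any open implication. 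Note that a shift-type automorphism, being ``aperiodic'' at finite stages, would give you neither of these two hypotheses; the binary-tree/odometer overlay is precisely the idea your proposal is missing. (Your lower bound $\rc(C)>0$ via Bott projections and non-embeddability of $L^{\times k}$ matches the paper's Claim (B), and proper outerness comes for free from the Rokhlin property rather than from the spectrum argument you sketch.)
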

\begin{Cor}
	\label{cor:main}
	There exists a simple unital separable nuclear $\Zh$-stable $C^*$-algebra 
	$B$ in the UCT class along with a point-norm continuous action $\beta 
	\colon \T \to \Aut(B)$ such that $B \rtimes_{\beta} \T$ is simple but not 
	$\Zh$-stable.
\end{Cor}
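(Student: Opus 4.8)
The plan is to take $B = C \rtimes_\alpha \Z$, with $C$ and $\alpha$ as furnished by Theorem~\ref{thm:main}, and to let $\beta = \hat{\alpha} \colon \T \to \Aut(B)$ be the dual action of $\T = \hat{\Z}$. All of the analytic difficulty resides in Theorem~\ref{thm:main}; the corollary is then a formal consequence obtained through Takai duality together with standard permanence properties, extending the introductory remark from $C(X)$ to the simple coefficient algebra $C$.

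First I would record that $B$ has the asserted structure. It is unital and separable because $C$ is, and it is nuclear because $C$ is an AH-algebra and nuclearity passes to crossed products by the amenable group $\Z$. It lies in the UCT class since $C$, being AH, lies in the bootstrap class (as an inductive limit of type~I algebras), and the bootstrap class is closed under crossed products by $\Z$ via the Pimsner--Voiculescu sequence. It is $\Zh$-stable directly by the conclusion of Theorem~\ref{thm:main}. For simplicity of $B$ I would invoke Kishimoto's criterion: since $C$ is simple, the crossed product $C \rtimes_\alpha \Z$ is simple precisely when every nonzero power of $\alpha$ is properly outer, equivalently when the Connes spectrum is full.

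Next I would observe that $\beta = \hat{\alpha}$ is point-norm continuous, as is any dual action of a compact abelian group. The crux is then Takai duality, which gives
\[
B \rtimes_\beta \T = (C \rtimes_\alpha \Z) \rtimes_{\hat{\alpha}} \hat{\Z} \cong C \otimes \K(\ell^2(\Z)).
\]
From here the two required properties of the iterated crossed product are immediate. The algebra $C \otimes \K$ is simple because $C$ is simple, and it is \emph{not} $\Zh$-stable because $C$ is not $\Zh$-stable and $\Zh$-stability is invariant under passage between a $C^*$-algebra and its stabilization (as $\Zh$ is strongly self-absorbing and $C \cong C \otimes \Zh$ if and only if $C \otimes \K \cong (C \otimes \K) \otimes \Zh$). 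This produces the desired pair $(B, \beta)$.

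The main obstacle is not in this deduction but in the hypothesis it quietly requires: simplicity of $B$ rests on knowing that \emph{every} nonzero power of $\alpha$ is properly outer, whereas the statement of Theorem~\ref{thm:main} as written asserts only that $\alpha$ itself is properly outer. I would therefore read off from the construction underlying Theorem~\ref{thm:main} (rather than from its bare statement) that the full $\Z$-action generated by $\alpha$ is aperiodic, so that Kishimoto's theorem applies; this is exactly the extra input that is not needed in the trivial $C(X)$ example and is what makes the simple, rather than merely the $\Zh$-destabilizing, conclusion possible.
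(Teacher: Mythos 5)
Your proposal is correct and is essentially the paper's own proof: take $B = C \rtimes_{\alpha} \Z$ and $\beta = \hat{\alpha}$, cite closure of the UCT class under crossed products by $\Z$, and apply Takai duality to get $B \rtimes_{\beta} \T \cong C \otimes \K$, which is simple and not $\Zh$-stable because $C$ is simple and not $\Zh$-stable. Your closing remark is well taken: the paper's one-line proof never explicitly verifies simplicity of $B$ itself, and as you note this requires outerness of \emph{every} nonzero power of $\alpha$ (Kishimoto's criterion), which does not follow from the bare statement of Theorem~\ref{thm:main} but does follow from the construction, since the single-tower Rokhlin property established there in the proof of Claim (A) forces all nonzero powers of $\alpha$ to be outer.
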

\begin{proof}[Proof of Corollary \ref{cor:main}] Let $C$ and $\alpha$ be as in 
the statement of Theorem~\ref{thm:main}. Set $B = C \rtimes_{\alpha} \Z$. As 
the UCT class is closed under crossed products by $\Z$ 
(\cite[22.3.5(g)]{blackadar_book}), $B$ is in the UCT class. Let 
$\beta = \hat{\alpha}$. Then $B$ is $\Zh$-stable but $B \rtimes_{\beta} \T 
\cong C \otimes K$ is simple but not $\Zh$-stable.
\end{proof}

\begin{proof}[Proof of Theorem \ref{thm:main}]
	We first explain the idea of the proof. We start with a Villadsen-type 
	construction, of the form
\[
		\begin{xy}
		(0,0)*+{C(X_0) }="base";
		(36,0)*+{ C(X_1) \otimes M_{n_1} }="0";
		(72,0)*+{ C(X_2) \otimes M_{n_2} }="00";
		(108,0)*+{ C(X_3) \otimes M_{n_3} }="000";
		{\ar@2 "base";"0"};
		{\ar@2 "0";"00"};
		{\ar@2 "00";"000"};
	\end{xy}
\]
where the spaces are products of spheres, and the connecting maps consist of 
various coordinate projections and a small number of point evaluations intended 
to make the inductive limit simple. We now take this inductive system and 
overlay it on top of a binary tree: we replace the $n$'th element in the 
sequence by a direct sum of $2^n$ copies of it, and copy each outgoing 
homomorphism twice so it ends up in two copies, as follows.
\[
\begin{xy}
	(0,0)*+{C(X_0) }="base";
	(36,-20)*+{ C(X_1) \otimes M_{n_1} }="0";
	(36,20)*+{ C(X_1) \otimes M_{n_1} }="1";
	(72,-30)*+{ C(X_2) \otimes M_{n_2} }="00";
	(72,-10)*+{ C(X_2) \otimes M_{n_2} }="01";
	(72,30)*+{ C(X_2) \otimes M_{n_2} }="10";
	(72,10)*+{ C(X_2) \otimes M_{n_2} }="11";
	(108,-35)*+{ C(X_3) \otimes M_{n_3} }="000";
	(108,-25)*+{ C(X_3) \otimes M_{n_3} }="001";
	(108,-15)*+{ C(X_3) \otimes M_{n_3} }="010";
	(108,-5)*+{ C(X_3) \otimes M_{n_3} }="011";
	(108,35)*+{ C(X_3) \otimes M_{n_3} }="100";
	(108,25)*+{ C(X_3) \otimes M_{n_3} }="101";
	(108,15)*+{ C(X_3) \otimes M_{n_3} }="110";
	(108,5)*+{ C(X_3) \otimes M_{n_3} }="111";
	{\ar@2 "base";"0"};
	{\ar@2 "base";"1"};
	{\ar@2 "0";"01"};
	{\ar@2 "0";"00"};
	{\ar@2 "1";"11"};
	{\ar@2 "1";"10"};
	{\ar@2 "00";"000"};
	{\ar@2 "00";"001"};
	{\ar@2 "01";"010"};
	{\ar@2 "01";"011"};
	{\ar@2 "10";"100"};
	{\ar@2 "10";"101"};
	{\ar@2 "11";"110"};
	{\ar@2 "11";"111"};
\end{xy}
\]
The resulting inductive limit is simply the tensor product of the one we had 
before with the continuous functions on the Cantor set. The odometer action on 
the Cantor set provides an almost periodic action with the Rokhlin property on 
the inductive limit. To get a simple $C^*$-algebra, we add a small number of 
point evaluations which go across and connect the different summands; a similar 
idea was used to construct order 2 symmetries of the Elliott invariant in 
\cite{hirshberg-phillips-asymmetry} and later of the algebra itself in 
\cite{AGP}. (Those two papers were not published in chronological order.)
 The dotted lines in the 
diagram below indicate a small number of point evaluation maps.
\[
\begin{xy}
	(0,0)*+{C(X_0) }="base";
	(15,-40)*+{ C(X_1) \otimes M_{n_1} }="0";
	(15,40)*+{ C(X_1) \otimes M_{n_1} }="1";
	(45,-60)*+{ C(X_2) \otimes M_{n_2} }="00";
	(45,-20)*+{ C(X_2) \otimes M_{n_2} }="01";
	(45,60)*+{ C(X_2) \otimes M_{n_2} }="10";
	(45,20)*+{ C(X_2) \otimes M_{n_2} }="11";
	(108,-70)*+{ C(X_3) \otimes M_{n_3} }="000";
	(108,-50)*+{ C(X_3) \otimes M_{n_3} }="001";
	(108,-30)*+{ C(X_3) \otimes M_{n_3} }="010";
	(108,-10)*+{ C(X_3) \otimes M_{n_3} }="011";
	(108,70)*+{ C(X_3) \otimes M_{n_3} }="100";
	(108,50)*+{ C(X_3) \otimes M_{n_3} }="101";
	(108,30)*+{ C(X_3) \otimes M_{n_3} }="110";
	(108,10)*+{ C(X_3) \otimes M_{n_3} }="111";
	{\ar@2 "base";"0"};
	{\ar@{.>}@<1ex> "base";"0"};
	{\ar@2 "base";"1"};
	{\ar@{.>}@<-1ex> "base";"1"};
	{\ar@2 "0";"01"};
	{\ar@{.>}@<-1ex> "0";"01"};
	{\ar@2 "0";"00"};
	{\ar@{.>}@<1ex> "0";"00"};
	{\ar@2 "1";"11"};
	{\ar@{.>}@<1ex> "1";"11"};
	{\ar@2 "1";"10"};
	{\ar@{.>}@<-1ex> "1";"10"};
	{\ar@{.>}
		"0";"10"};
	{\ar@{.>} "0";"11"};
	{\ar@{.>}
		"1";"01"};
	{\ar@{.>}
		"1";"00"};
	{\ar@2 "00";"000"};
	{\ar@{.>}@<1ex> "00";"000"};
	{\ar@2 "00";"001"};
	{\ar@{.>}@<-1ex> "00";"001"};
	{\ar@2 "01";"010"};
	{\ar@{.>}@<1ex> "01";"010"};
	{\ar@2 "01";"011"};
	{\ar@{.>}@<-1ex> "01";"011"};
	{\ar@2 "10";"100"};
	{\ar@{.>}@<-1ex> "10";"100"};
	{\ar@2 "10";"101"};
	{\ar@{.>}@<1ex> "10";"101"};
	{\ar@2 "11";"110"};
	{\ar@{.>}@<-1ex> "11";"110"};
	{\ar@2 "11";"111"};
	{\ar@{.>}@<1ex> "11";"111"};
	{\ar@{.>} "00";"010"};
	{\ar@{.>}
		"00";"011"};	
	{\ar@{.>}
		"00";"111"};
	{\ar@{.>}
		"00";"110"};
	{\ar@{.>}
		"00";"101"};
	{\ar@{.>}
		"00";"100"};
	{\ar@{.>}	"01";"000"};
	{\ar@{.>}	"01";"001"};
	{\ar@{.>}	"01";"100"};
	{\ar@{.>}	"01";"101"};
	{\ar@{.>}	"01";"110"};
	{\ar@{.>}	"01";"111"};
	{\ar@{.>}	"10";"000"};
	{\ar@{.>}	"10";"001"};
	{\ar@{.>}	"10";"010"};
	{\ar@{.>}	"10";"011"};
	{\ar@{.>}	"10";"110"};
	{\ar@{.>}	"10";"111"};
	{\ar@{.>}	"11";"000"};
	{\ar@{.>}	"11";"001"};
	{\ar@{.>}	"11";"100"};
	{\ar@{.>}	"11";"101"};
	{\ar@{.>}	"11";"010"};
	{\ar@{.>}	"11";"011"};
\end{xy}
\]
This will now provide a simple $C^*$-algebra, and with some fiddling around, we 
can modify the odometer action on the Cantor set to get an automorphism of this 
Villadsen-type algebra with similar properties; those properties were shown in 
\cite{hirshberg-JOT} to be sufficient to deduce that the crossed product is 
$\Zh$-stable. 

We now proceed to make all of this precise. We use similar notation to that 
used in \cite{hirshberg-phillips-asymmetry}, so that we can refer to technical 
lemmas already proved there, without the need to repeat them here.
We fix the following notation for our construction. The construction requires choosing a sequence of natural numbers $d(n)$ satisfying conditions (\ref{Cn_6918_General_dn}), (\ref{Cn_6918_General_dnkn}) and (\ref{Cn_6918_General_Size}) below. Any sequence satisfying those conditions would do; for instance, we could choose $d(n) = 10^n$. 
	\begin{enumerate}
		\item\label{Cn_6918_General_dn}
		$(d (n) )_{n = 0, 1, 2, \ldots}$
		is a sequence
		with $d (0) = 1$. 
		Moreover, we set $l(0) = 1$ and 
		for $n \in \N$, we set
		\[
		l (n) = d (n) + 2^{n-1} \, ,
		\qquad
		r (n) = \prod_{j = 0}^n l (j) \, ,
		\andeqn
		s (n) = \prod_{j = 0}^n d (j) \, .
		\]
		\item\label{Cn_6918_General_dnkn}
		We assume that $d(n) > 2^{n-1} $
		for all $n \in \N$. (In fact, this sequence will increase much faster 
		than $2^{n-1}$.)
		\item\label{Cn_6918_General_Size}
		We define
		\[
		\kp = \inf_{n \in \N} \frac{s (n)}{r (n)} \, .
		\]
		We assume that the sequences above are chosen so that $\kp > 
		\frac{1}{2}$.
		%
		\item\label{Cn_6918_General_Spaces} For $n = 0, 1, 2, \ldots$, we set 
		$(X_n) = (S^2)^{s(n)}$. 	We identify $X_{n+1} = X_n^{d(n+1)}$. 
		For $n \geq 0$ and $j = 1, 2, \ldots, d (n + 1)$,
		we let $P^{(n)}_j \colon X_{n + 1} \to X_{n}$
		be the $j$-th coordinate projection.
		\item\label{condition-points-dense}
		We choose points $x_m \in X_m$
		for $m \geq 0$ such that 
		for all $n \geq 0$,
		the set
		\begin{align*}
			& \big\{ \big( P^{(n)}_{\nu_{1}} \circ P^{(n + 1)}_{\nu_{2}}
			\circ \cdots \circ P^{(m - 1)}_{\nu_{m - n}} \big) (x_m) \mid
			\\
			& \hspace*{1em} {\mbox{}}
			{\mbox{$m > n$
					and $\nu_j \in  \{ 1, 2, \ldots, d (n + j) \}$
					for $j = 1, 2, \ldots, m - n$}} \big\}
		\end{align*}
		is dense in $X_n$.
		\item\label{Cn_6918_General_Algs}
		For $n = 0,1,2,\ldots$, we set
		\[
		C_n \cong C(X_n \times \Z_{2^n} , M_{r (n)} ) \; .
		\]
		We freely identify $C_n$ with 
		\[
		C_n \cong  M_{r (n)} \otimes C(X_n \times \Z_{2^n} ) \; .
		\]
		We also identify
		\[
		 M_{r (n)} \cong  M_{l (1)} \otimes  M_{l (2)} \otimes \ldots  M_{l (n)}
			\, .
		\]
		\item We let $\pi_{n,n+1} \colon \Z_{2^{n+1}} \to \Z_{2^n}$ denote 
		the map $\pi_{n,n+1} (k) = k ( \mathrm{mod } \;2^n )$.
		\item\label{Cn_6918_General_Maps}
		For $n \geq 0$,
		we define \cfn{s}
		\[
		S_{n, 1}, \, S_{n, 2}, \, \ldots, \, S_{n, \, l (n + 1)}
		\colon  (X_{n + 1} \times \Z_{2^{n+1}} )  \to X_n \times \Z_{2^n} 
		\]
		as follows:
		\begin{enumerate}
			\item\label{Cn_6918_General_1a_Maps_XLow}
			For $k \in \Z_{2^{n+1}}$, for $x \in X_{n + 1}$ and for $j = 1, 
			2, \ldots, d (n + 1)$, set
			\[
			S_{n, j} (x,k) = ( P^{(n)}_j (x) , \pi_{n,n+1}(k) )
			\, .
			\]
			\item\label{Cn_6918_General_1a_Maps_YTop}
			For
			$x \in X_{n + 1}$, $k \in \Z_{2^{n+1}}$ and $j = 0, 1, 2 , \ldots,  
			2^{n}-1$, set
			\[
			S_{n, d (n + 1) + 1 + j} (x,k) = (x_n, j )
			\]
			(In the last row above, we identify $	j = 0, 1, 2 
			, \ldots,  2^{n}-1 $ with the elements of $\Z_{2^n}$.)
		\end{enumerate}
		Next, 		
		we define unital \hm s
		\[
		\gm_n \colon
		C (X_n \times \Z_{2^n} ) 
		\to M_{l (n + 1)} \big( C (X_{n + 1} \times \Z_{2^{n+1}} ) \big) 
		\]
		by
			\[
		\gm_n (f)
		= \diag \big( f \circ S_{n, 1}, \, f \circ S_{n, 2},
		\, \ldots, \, f \circ S_{n, \, l (n + 1)} \big) \, .
		\]
		We then define
		\[
		\Gm_{n + 1, \, n} \colon C_n \to C_{n + 1}
		\]
		by
		$\Gm_{n + 1, \, n} = \id_{M_{r (n)}} \otimes \gm_n$.
		Moreover,
		for $ n \geq m \geq 0 $, we set
		\[
		\Gm_{n, m}
		= \Gm_{n, n - 1} \circ \Gm_{n - 1, \, n - 2} \circ \cdots
		\circ \Gm_{m + 1, m}
		\colon C_m \to C_n \, .
		\]
		In particular, $\Gm_{n, n} = \id_{C_n}$.
		\item\label{Cn_6918_General_Limit}
		We set $C = \dirlim_n C_n$,
		taken with respect to the maps $\Gm_{n, m}$.
		The maps associated with the direct limit
		will be called $\Gm_{\infty, m} \colon C_m \to C$
		for $m \geq 0$.
		\setcounter{CnsEnumi}{\value{enumi}}
		%

		\item \label{Cond_automorphisms}
		We define automorphisms $\alpha_n \colon C_n \to C_n$ as follows.
		Let $v_n \in  M_{l ( n )}$ be the unitary given as a direct sum of the 
		identity matrix of size $d(n)$ and a cyclic permutation of the last 
		$2^{n-1}$ elements, that is, 
		\[
		v_n = \sum_{j=1}^{d(n)}e_{j,j} +\left (  \sum_{j=1}^{2^{n-1}-1} 
		e_{d(n)+j,d(n)+j+1}  \right )
		+ e_{d(n)+2^{n-1},d(n)+1} \, .
		\]
		Define $u_n \in  M_{r ( n )} \cong M_{l(1)} \otimes M_{l(2)} \otimes 
		\cdots \otimes M_{l(n)}$ by
		\[
		u_n = v_1 \otimes v_2 \otimes \cdots \otimes v_n \, .
		\]
		Now define
		\[
		\alpha_n \colon  C_n \to C_n
		\]
		by 
		\[
		\alpha_n (f) (x,k) = u_n f(x,k+1_{\Z_{2^n}}) u_n^* \, .
		\]
	\end{enumerate}
Notice that for any $n \in \N$ we have 
\[
\frac{s (n+1)}{r (n+1)}  = \frac{s (n)}{r (n)} \cdot \frac{ d(n+1) }{ l(n+1)} = \frac{s (n)}{r (n)} \cdot \frac{ d(n+1) }{d(n+1) + 2^{n}} <  \frac{s (n)}{r (n)} \, ,
\]
 so the sequence $\left ( \frac{s (n)}{r (n)} \right )_{n = 1, 2, \ldots}$
is strictly decreasing.

By the choice of point evaluations in condition (\ref{condition-points-dense}) above, it follows from \cite[Proposition 
2.1]{DGNP} that the $C^*$-algebra $C$ is simple.

We claim now that for all $n \geq 0$, we have 
\[ 
\alpha_{n+1} \circ \Gm_{n + 1, 
n} = \Gm_{n + 1, n} \circ \alpha_n
\, .
\]
This claim implies that this consistent sequence of automorphisms defines an 
inductive 
automorphism $\alpha \colon C \to C$.
To prove the claim, fix $f \in C_n$ and a point $(x,k) 
\in X_{n+1} \times \Z_{2^{n+1}}$. We may assume that $f$ is of the form $f = a 
\otimes g$ for $a \in M_{r(n)}$ and $g \in C(X_n \times \Z_{2^n})$, as those 
span the entire space. Denote 
$\tilde{g}(x,k) = g(x,k+1_{\Z_{2^n}})$. 
We have
\begin{align*}
	\alpha_{n+1} &  \circ \Gm_{n + 1, n} (f) (x , k)  & \\
	& \stackrel{(\ref{Cond_automorphisms})}{=} u_{n+1} \Gm_{n + 1, n} (f) (x,k+1_{\Z_{2^{n+1}}} ) u_{n+1}^* & \\
	& \stackrel{(\ref{Cond_automorphisms})}{=}  u_n a u_n^* \otimes v_{n+1} \gamma_{n + 1, n} (g) 
	(x,k+1_{\Z_{2^{n+1}}}) v_{n+1}^* & \\
	& \stackrel{(\ref{Cn_6918_General_Maps})}{=}  u_n a u_n^* \otimes 	 v_{n+1}
	\diag \big( g (S_{n, 1} ( (x,k+1_{\Z_{2^{n+1}}}) ) ), 
	\ldots, \,
		& \\
		& \quad \quad	
	  g (S_{n, l (n + 1) } ( (x,k+1_{\Z_{2^{n+1}}}) ) )  \big)  
	 v_{n+1}^* & \\
	& \stackrel{(\ref{Cn_6918_General_Maps})}{=} u_n a u_n^* \otimes 	 v_{n+1}
	\diag \big( g (P^{(n)}_1 (x) , \pi_{n,n+1}(k+1_{\Z_{2^{n+1}}}) ), 
	\ldots, 
	& \\
	& \quad \quad g( P^{(n)}_{d(n+1)} (x) , \pi_{n,n+1}(k+1_{\Z_{2^{n+1}}}) ),  
	g (  (x_n, 
	0 ) ) , g (  (x_n, 1 ) ) 
	, \ldots, & \\
	&  	\quad \quad
	g (  (x_n, 2^{n}-1 ) ) \big)  v_{n+1}^*  & \\	
	&  \stackrel{(\ref{Cond_automorphisms})}{=} u_n a u_n^* \otimes 	
	\diag \big( g (P^{(n)}_1 (x) , \pi_{n,n+1}(k+1_{\Z_{2^{n+1}}}) ), 
	\ldots, 
	& \\
	& \quad \quad g( P^{(n)}_{d(n+1)} (x) , \pi_{n,n+1}(k+1_{\Z_{2^{n+1}}}) ),  
	g (  (x_n, 
	1 ) ) , g (  (x_n, 2 ) ) 
	, \ldots, & \\
	&  	\quad \quad
	g (  (x_n, 2^{n}-1 ) ) , g (  (x_n, 0 ) ) \big)    & \\	
	& = u_n a u_n^* \otimes 	
	\diag \big( \tilde{g} (P^{(n)}_1 (x) , \pi_{n,n+1}(k) ), 
	\ldots, 
	& \\
	& \quad \quad \tilde{g}( P^{(n)}_{d(n+1)} (x) , \pi_{n,n+1}(k) ),  
	\tilde{g} (  (x_n, 
	0 ) ) , \tilde{g} (  (x_n, 1 ) ) 
	, \ldots, & \\
	&  	\quad \quad
	\tilde{g} (  (x_n, 2^{n}-1 ) ) ) \big)    & \\
	&  \stackrel{(\ref{Cn_6918_General_Maps})}{=}  \Gm_{n + 1, n} \circ \alpha_{n} (f) (x , k) &
\end{align*}
 To complete the proof of the theorem, we 
need to show the following.
\begin{Claim-nonum}[A] $C \rtimes_{\alpha} \Z$ is $\Zh$-stable.
\end{Claim-nonum}
\begin{Claim-nonum}[B] $C$ is not $\Zh$-stable.
\end{Claim-nonum}
For Claim (A), we show that $\alpha$ has the Rokhlin property, and is nearly 
approximately inner in the sense of \cite[Section 4]{hirshberg-JOT}. It then 
follows from \cite[Theorem 4.1]{hirshberg-JOT} that $C \rtimes_{\alpha} \Z$ is 
$\Zh$-stable, as required. To see that $\alpha$ is nearly approximately inner, 
we note that the restriction of $\alpha$ to each subalgebra $C_n$ is in fact 
periodic, with period $2^n$. This condition is stronger than near approximate 
innerness. (See \cite[Proposition 4.2]{hirshberg-JOT}.) To see that $\alpha$ 
has the Rokhin property, it suffices to show the following stronger property: 
 given a finite set $F \subset 
C$, $\ep>0$ and $N \in \N$, there exists $m \geq N$ and projections 
$p_0,\ldots,p_{m-1}$ such that 
\begin{enumerate}
	\item $\sum_{k=0}^{m-1} p_k = 1$.
	\item $\|\alpha(p_k) - p_{k+1} \| < \eps$ for all $j$, with addition taken 
	modulo $m$.
	\item $\| [ p_k , a ] \|<\ep$ for $k=1,2,\ldots m$ and for all $a \in F$. 
\end{enumerate}
That is, we show that the definition of the Rokhlin property holds with a 
single tower for arbitrarily large tower lengths; here we obtain lengths which 
are powers of $2$.
By perturbing $F$, we may assume without loss of generality that $F \subseteq 
C_n$ for some $n$. By enlarging $n$, we may assume that $2^n \geq N$. Recall from condition (\ref{Cn_6918_General_Algs}) 
that $C_n \cong C (X_n \times \Z_{2^n},M_{r(n)} )$. For $k \in \Z_{2^n}$, define 
$p_k$ by $p_k(x,j) = \delta_{k,j} 1_{M_{r(n)}}$. Then the projections 
$p_0,p_1,\ldots,p_{2^n-1}$ are central in $C_n$ (and in particular commute with 
$F$), add up to 1, and they satisfy $\alpha(p_k) = p_{k+1}$ for all $k$.

For Claim (B), we proceed as in \cite{hirshberg-phillips-asymmetry} to show 
that $C$ has positive radius of comparison, and in particular, it is not 
$\Zh$-stable, as it was shown in \cite{rordam-IJM} that $\Zh$-stable 
$C^*$-algebras have strict comparison. In fact, we claim that $\rc (C) 
\geq 2 \kappa - 1$. Notice that by condition (\ref{Cn_6918_General_Size}), we have $2\kappa - 1 > 0$, so this will show that $\rc(C) > 0$, and in particular that $C$ is not $\Zh$-stable. 

Indeed, suppose $\rh < 2 \kappa - 1$.
We show that $C$ does not have $\rh$-comparison.
We shall use projections to witness the failure of comparison. For projections, 
a trace $\tau$ coincides with the associated dimension function $d_{\tau}$.

Let $L$ be the tautological line bundle over
$S^2 \cong \C \mathbb{P}^1$, and let $b \in C (S^2, M_2)$ denote the Bott projection, that is, the projection onto the section space of~$L$.
We recall (\cite[Lemma 1.9]{hirshberg-phillips-asymmetry}) that the Cartesian 
product $L^{\times k}$
does not embed in a trivial bundle over $(S^2)^k$
of rank less than $2k$.
For $n > 0$, set $b_n = (\id_{M_2} \otimes \tgamma_{n, 0}) (b)
\in M_2 (C_n)$. On each of the connected components of $X_n \times \Z_{2^n}$, the projection $b_n$ is a direct sum of a projection of rank $s(n)$ onto the section space of the Cartesian product $L^{\times s(n)}$ and a trivial bundle of rank $r(n) - s(n)$. Note that by condition (\ref{Cn_6918_General_Size}), we have $s(n) > r(n) - s(n)$ for any $n \in \N$.

Choose $n \in \N$ such that $1 / {r (n)} < 2 \kp - 1 - \rh$.
Choose $M \in \N$ such that $\rh + 1 < M / {r (n)} < 2 \kp$.
Let $e \in M_{\infty} (C_n)$ be a trivial \pj{}
of rank~$M$.
By slight abuse of notation,
we use $\tgamma_{m, n}$ to denote the amplified map
from $M_{\infty} (C_n)$ to $M_{\infty} (C_m)$ as well.
For $m > n$, the rank of $\tgamma_{m, n} (e)$
is $M \cdot \frac{r (m)}{r (n)}$,
and the choice of $M$ guarantees that
this rank is strictly less than $2 s (m)$.
Now, for any trace $\tau$ on $C_m$
(and thus for any trace on $C$),
given that $b$ is a rank $1$ projection, we have
\[
\tau (\tgamma_{m, n} (e))
= \frac{1}{r (m)} \cdot M \cdot \frac{r (m)}{r (n)} = \frac{M}{r (n)} > 1+\rho 
= \tau(b_m) + \rho \, .
\]

On the other hand, if
$\tgamma_{\infty, 0} (b) \precsim \tgamma_{\infty, n} (e)$
then, in particular, there exists some $m > n$
and $x \in M_{\infty} (C_m)$
such that $\|x\tgamma_{m, n} (e)x^* - b_m\| < \frac{1}{2}$,
and thus $b_m$ is Murray von-Neumann equivalent to a subprojection of 
$\tgamma_{m, n} (e)$. In particular, it shows that the projection onto the 
section space of the Cartesian product $L^{\times s(m)}$ embeds into a trivial 
bundle of rank $M \cdot \frac{r (m)}{r (n)} < 2 s (m)$. This is a contradiction.

 (We can in fact compute the radius 
of comparison exactly, and obtain different radii of comparison by picking 
different choices of parameters, but this is not important here.) 
\end{proof}

We conclude with some questions.
\begin{qst}
	We don't have control over the trace simplex of the $C^*$-algebra $B$ in 
	Corollary~\ref{cor:main}. One expects that it might be the Poulsen simplex. 
	Is there an example of a $C^*$-algebra $B$ as in Corollary~\ref{cor:main} 
	which furthermore has unique trace?
\end{qst}
\begin{qst}
	Does there exist a simple unital separable nuclear $\Zh$-stable 
	$C^*$-algebra 
	$B$ along with a point-norm continuous action $\beta 
	\colon \R \to \Aut(B)$ such that $B \rtimes_{\beta} \R$ is simple but not 
	$\Zh$-stable?  In light of the results for the circle, it seems reasonable 
	to conjecture that there 
	should be such an example.
\end{qst}
\begin{qst}
	Let $B$ be a simple unital separable nuclear stably finite $\Zh$-stable 
	$C^*$-algebra. Suppose $\beta$ is an action of $\T$ or $\R$ on $B$ with 
	full strong Connes spectrum. Suppose that the crossed product is 
	$\Zh$-stable. Does the action necessarily have finite Rokhlin dimension? 
	That may be one possible generalization of the results for discrete group 
	actions mentioned in the introduction (possibly with some restrictions on 
	the trace simplex).
\end{qst}
\begin{qst}
	Let $A$ be a simple unital separable nuclear stably finite $C^*$-algebra with positive radius of comparison. Let $\alpha$ be a properly outer automorphism of $A$. What are the possible values of $\rc(A \rtimes_{\alpha} \Z)$? How are those related to $\rc(A)$? (See \cite{AGP,asadi-vasfi} for some partial related results concerning actions of finite groups.)
\end{qst}
\begin{rmk}
	The choice of powers of $2$ was arbitrary. We could have easily made 
	choices to `overlay' such a $C^*$-dynamical system over any other odometer 
	action. Furthermore, by replacing spheres by contractible spaces, as was 
	done in \cite{toms-counterexample,hirshberg-phillips-asymmetry}, we could 
	have obtained examples with more tractable $K$-theory. 
	
	We remark that the Bratteli diagram we use for the construction of those 
	odometer actions are not the ordered Bratteli-Vershik diagrams used to 
	model more general Cantor dynamical systems, as done in \cite{HPS}. 
\end{rmk}


\end{document}